\title{Automatic continuity for the unitary group}
\author{Todor Tsankov}
\date{September 2011}
\address{Universit\'e Paris 7 \\ UFR de Math\'ematiques, case 7012  \\ 75205 Paris \textsc{cedex} 13}
\email{todor@math.jussieu.fr}
\subjclass[2010]{Primary 54H12}
\keywords{unitary group, automatic continuity}
\thanks{Research partially supported by the ANR network AGORA}
\begin{document}

\begin{abstract}
We show that every homomorphism from the infinite-dimensional unitary or orthogonal group to a separable group is continuous.
\end{abstract}

\maketitle

\section{Introduction}
Many uncountable groups come naturally equipped with a group topology that greatly facilitates their study. When dealing with topological groups, one can use powerful tools such as Haar measure, Lie group theory, Baire category methods, etc. On the other hand, if one wants to study the groups abstractly, far fewer tools are available. One way to approach this problem is via reconstruction theorems that recover the topology of the group from its algebraic structure. Perhaps the strongest results in the literature of this type have the following form: let $G$ be some Polish group, then every homomorphism from $G$ to a separable group is continuous. (One always needs some restriction on the target group to avoid the trivial example of the identity map from $G$ to itself equipped with the discrete topology. The restriction of separability is rather mild and does not exclude most interesting examples.) In that case, we will say that $G$ has the \df{automatic continuity property}.

The first automatic continuity results of this type were obtained by Kechris--Rosendal~\cite{Kechris2007a}, using the techniques of \df{ample generics} developed by Hodges, Hodkinson, Lascar, and Shelah~\cite{Hodges1993a}. Soon thereafter other results in the same vein followed: Rosendal--Solecki~\cite{Rosendal2007}, Rosendal~\cite{Rosendal2008}, Kittrell--Tsankov~\cite{Kittrell2010}; see the recent survey \cite{Rosendal2009a} for more details.

The following is the main theorem of this paper. It answers a question of Rosendal~\cite{Rosendal2008}.
\begin{theorem} \label{th:autom-cont}
The unitary (orthogonal) group of an infinite-dimensional, separable, complex (real) Hilbert space, equipped with the strong operator topology, has the automatic continuity property.
\end{theorem}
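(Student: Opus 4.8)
The plan is to reduce Theorem~\ref{th:autom-cont} to the \emph{Steinhaus property} and then verify the latter directly, in the spirit of Rosendal--Solecki~\cite{Rosendal2007} and Kittrell--Tsankov~\cite{Kittrell2010}. Recall that a Polish group $G$ has the Steinhaus property if there is an integer $n$, depending only on $G$, such that $W^n$ is a neighbourhood of the identity whenever $W \subseteq G$ is symmetric and \emph{countably syndetic}, meaning that $G$ is covered by countably many left translates of $W$. A short and standard argument shows that this implies automatic continuity: if $\phi \colon G \to K$ is a homomorphism into a separable group and $V$ is a symmetric neighbourhood of $1_K$, then separability of $K$ makes the symmetric set $\phi^{-1}(V^2)$ countably syndetic, so by the Steinhaus property $\bigl(\phi^{-1}(V^2)\bigr)^n$ is a neighbourhood of $1_G$; as it is contained in $\phi^{-1}(V^{2n})$ and $V^{2n}$ can be taken inside any prescribed neighbourhood of $1_K$, continuity at the identity, and hence everywhere, follows. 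It therefore suffices to prove that the unitary group $U(H)$ has the Steinhaus property; the orthogonal case is identical, so I will only discuss $U(H)$.

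The central difficulty is that $W$ carries no regularity: because the homomorphism is not assumed continuous, $W$ need not be Baire measurable, so Pettis's theorem is unavailable and the entire argument must rest on the algebraic and geometric structure of $U(H)$. The first main step is to trap a large subgroup in a bounded power of $W$. Fix a decomposition $H = \bigoplus_m H_m$ into countably many pairwise orthogonal, infinite-dimensional subspaces. The block subgroups $U(H_m)$ pairwise commute, are mutually conjugate in $U(H)$, and together generate a closed copy of the full product $P = \prod_m U(H_m)$. Countable syndeticity covers $P$ by countably many sets $g_j W \cap P$, and any two elements of $P$ lying in a common such set have their ratio in $W^2$. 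A Bergman-type covering lemma for infinite products --- which relies on the mutual independence of the coordinate subgroups and on each factor $U(H_m)$ being itself a bounded group, and which requires no measurability whatsoever --- then yields an integer $N$ and a bound $k$ with $U\bigl(\bigoplus_{m \ge N} H_m\bigr) \subseteq W^k$. Thus $W^k$ contains $U(K)$ for an infinite-dimensional subspace $K$ with infinite-dimensional complement.

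The second main step promotes this subgroup to a genuine strong-operator neighbourhood of the identity, i.e.\ a set of the form $\{g : \|g\xi_i - \xi_i\| < \epsilon \text{ for } i \le d\}$. Here the unitary group differs crucially from a non-archimedean group such as $S_\infty$: because the natural action is on the Hilbert space rather than a discrete set, basic neighbourhoods are \emph{approximate} stabilizers, not the exact pointwise stabilizers of finite sets, and so trapping a subgroup is not by itself enough. The remedy combines two features of $U(H)$. First, its homogeneity: $U(H)$ acts transitively on pairs of complementary infinite-dimensional subspaces, and since $W^k$ already contains the large group $U(K)$ it contains a rich supply of conjugators, which upgrades the inclusion to $U(K') \subseteq W^{k'}$, with $k'$ uniform, for a family of subspaces $K'$ ample enough to resolve a fixed basic neighbourhood. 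Second, a bounded-generation property: there is a constant $c$ such that every unitary lying in a suitably chosen basic neighbourhood is a product of at most $c$ unitaries, each supported on one of the subspaces $K'$. Concatenating the two features places a basic neighbourhood inside $W^n$ for a uniform $n$, which is the Steinhaus property.

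The principal obstacle is the passage carried out in these two steps without any measurability. In the first step one must arrange the combinatorial covering lemma so that the exponent $k$ is genuinely uniform over all symmetric countably syndetic $W$, and so that it respects the strong operator topology. In the second step the essential new difficulty, absent in the non-archimedean setting, is metric rather than algebraic: one must convert an exact subgroup into an approximate stabilizer while keeping every conjugator and every factor inside a fixed bounded power of $W$, so that the final word length $n$ does not depend on $W$. Controlling these constants simultaneously is the heart of the proof.
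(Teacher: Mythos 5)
Your reduction to the Steinhaus property and your first step (trapping $U(K)\subseteq W^{k}$ for a balanced subspace $K$, which the paper achieves via a diagonalisation, a pigeonhole over continuum many symmetries, and the Halmos--Kakutani theorem that every unitary is a product of four symmetries) are both sound in outline. The genuine gap is in your second step, and you have in effect relabelled the main difficulty as a ``bounded-generation property'' rather than resolved it. Everything your combinatorial arguments produce --- the block subgroup $U(K)$, its conjugates $U(K')$ by transport elements lying in powers of $W$ --- consists of \emph{exact} pointwise stabilizers, and any bounded product of unitaries supported on subspaces orthogonal to the test vectors $\xi_1,\dots,\xi_d$ still fixes those vectors exactly; the resulting set has empty interior in the strong operator topology. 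If instead you allow the supports $K'$ to meet the span of the $\xi_i$, then the factorisation $u=u_1u_2$ into unitaries supported on balanced invariant subspaces (spectral theorem) applies to \emph{every} $u\in U(H)$, not just those in a basic neighbourhood, so your scheme would yield $U(H)=W^{n}$ for a uniform $n$ and for every symmetric countably syndetic $W$. That conclusion is false: taking $W=V^2$ for a small symmetric strong-operator neighbourhood $V=\{g:\|g\xi-\xi\|<\eps\}$ gives a countably syndetic symmetric set with $W^{n}\subseteq\{g:\|g\xi-\xi\|<2n\eps\}\neq U(H)$. So the step as described either produces no open set or proves too much; there is no reading on which it closes.

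What is actually needed, and what the paper supplies, is a mechanism for handling unitaries that are \emph{close to} but not equal to the identity, and this cannot be done by conjugating into trapped subgroups. The paper's route has three ingredients absent from your proposal: (i) the Gaussian construction embeds $U(K)\to\Aut(X,\mu)\to U(H)$, and the known automatic continuity of $\Aut(X,\mu)$ (Kittrell--Tsankov plus \cite{BenYaacov2010p}) is imported through Lemma~\ref{l:subgroup} to place a genuinely open neighbourhood of $1$ in $I_K$ inside $W^{120}$ (Lemma~\ref{l:goodK}); (ii) a transport lemma through copies of $S_\infty$, combined with the spectral theorem and the conjugation-invariance of the \emph{operator norm}, upgrades this to a full norm ball $B_\eps\subseteq W^{496}$ (Lemmas~\ref{l:transport} and~\ref{l:uniform-ball}) --- note that it is precisely the norm, not a strong-operator neighbourhood, that survives conjugation by the transport elements; and (iii) the topometric ample generics theorem of Ben~Yaacov--Berenstein--Melleray converts the norm-open set $W^{10}B_\eps$ into a strong-operator neighbourhood of the identity. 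Step (iii) is the only point in the whole argument where a strong-operator open set appears, and it is a substantial external theorem, not something recoverable from homogeneity and bounded generation. Your proposal correctly locates the metric difficulty but supplies no substitute for (i)--(iii).
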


The proof of this theorem relies in an essential way on recent work of Ben~Yaacov--Berenstein--Melleray~\cite{BenYaacov2010p}, who, extending the approach of \cite{Kechris2007a}, developed a new theory of topometric groups with ample generics that we briefly describe. They observed that Polish groups that are naturally presented as isometry groups of metric spaces, apart from the Polish topology of pointwise convergence, also carry a natural bi-invariant uniform structure, namely, the one defined by uniform convergence on the metric space. For example, for the unitary group of an infinite-dimensional Hilbert space, the Polish topology is the strong operator topology and the bi-invariant uniform structure is given by the operator norm. One of the main results of \cite{BenYaacov2010p} is that if such a group $G$ has \df{ample topometric generics}, then any homomorphism from $G$ to a separable group that is continuous in the uniform topology is also continuous in the Polish topology. They also found examples of groups with this property; the ones that will be important for us are the unitary group and the automorphism group of a Lebesgue probability space $\Aut(X, \mu)$. Using this theorem and previous work of Kittrell--Tsankov~\cite{Kittrell2010}, they were able to show that $\Aut(X, \mu)$ has the automatic continuity property.

To avoid repetition, in the remarks below we concentrate on the unitary group but they are also valid for the orthogonal group.

One immediate corollary of our theorem is that every action of the unitary group $U(H)$ by homeomorphisms on a compact metrizable space or by isometries on a separable metric space is automatically continuous. Thus, by Gromov--Milman~\cite{Gromov1983}, every action of $U(H)$ on a compact metrizable space has a fixed point and Kirillov's and Olshanski's classification \cites{Kirillov1973, Olshanski1978} of \emph{continuous} unitary representations of $U(H)$ becomes a classification of all representations of the discrete group $U(H)$ on a separable Hilbert space, etc.

Combining the theorem with the result of Stojanov~\cite{Stojanov1984} that the unitary group is \df{totally minimal} (i.e. every continuous homomorphism to a Hausdorff topological group is open), we obtain the following corollary.
\begin{cor} \label{c:minimal}
Let $G$ be the unitary or the orthogonal group. Then the following hold:
\begin{enumerate} \romanenum
\item \label{c:min:i} $G$ admits a unique separable group topology;
\item \label{c:min:ii} if $G'$ is a Polish group, and $\phi \colon G \to G'$ a homomorphism, then $\phi(G)$ is a closed subgroup of $G'$.
\end{enumerate}
\end{cor}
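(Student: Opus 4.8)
The plan is to derive both statements from Theorem~\ref{th:autom-cont} together with Stojanov's total minimality, which I will use in the form: every continuous surjective homomorphism from $(G,\mathrm{SOT})$ onto a Hausdorff topological group is open. Note first that the strong operator topology $\mathrm{SOT}$ is itself a separable group topology on $G$, so proving (i) amounts to showing it is the only one. To that end, let $\tau$ be any separable Hausdorff group topology on $G$. Since $(G,\tau)$ is a separable group, Theorem~\ref{th:autom-cont} applies to the identity homomorphism $\mathrm{id}\colon (G,\mathrm{SOT}) \to (G,\tau)$ and shows it is continuous, i.e.\ $\tau \subseteq \mathrm{SOT}$. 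By total minimality this continuous surjective homomorphism onto the Hausdorff group $(G,\tau)$ is open, i.e.\ $\mathrm{SOT} \subseteq \tau$; hence $\tau = \mathrm{SOT}$, which is (i).

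For (ii), Theorem~\ref{th:autom-cont} again gives that $\phi$ is continuous. I would equip $H := \phi(G)$ with the subspace topology $\sigma$ inherited from $G'$, so that $(H,\sigma)$ is a Hausdorff topological group and $\phi\colon (G,\mathrm{SOT}) \to (H,\sigma)$ is a continuous surjection. Total minimality then makes $\phi$ open, so the canonical bijection factors as a topological isomorphism $G/\ker\phi \cong (H,\sigma)$; as $\ker\phi$ is a closed normal subgroup of the Polish group $G$, the quotient $G/\ker\phi$ is Polish, and therefore so is $(H,\sigma)$. To conclude, I would invoke the classical fact that a subgroup of a Polish group which is Polish in the subspace topology is closed: such an $H$ is $G_\delta$ in $G'$, hence a dense $G_\delta$, and thus comeager, in $\overline{H}$; if $H$ were proper, then for $g \in \overline{H}\setminus H$ the coset $gH$ would be a second dense $G_\delta$, hence comeager, set in $\overline H$ disjoint from $H$, which is impossible in the nonempty Baire space $\overline{H}$. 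Thus $\phi(G) = H$ is closed in $G'$.

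The genuinely new input, automatic continuity, is supplied by Theorem~\ref{th:autom-cont}, so the remaining work is soft. The two points requiring care are that each target topology be Hausdorff, so that Stojanov's theorem may legitimately be applied, and the clean assembly of the descriptive set-theoretic fact that a Polish subgroup of a Polish group is closed via the coset-and-category argument above.
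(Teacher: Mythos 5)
Your proposal is correct and follows essentially the same route as the paper: automatic continuity (Theorem~\ref{th:autom-cont}) gives continuity, Stojanov's total minimality gives openness, and for (ii) the image is identified with the Polish quotient $G/\ker\phi$ and hence is closed. The only difference is that you spell out the standard Baire-category argument that a Polish subgroup of a Polish group is closed, which the paper simply takes for granted.
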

Atim~\cite{Atim2008} had previously shown that the unitary and orthogonal groups admit a unique Polish group topology.

This corollary rules out the existence of non-trivial homomorphisms from $U(H)$ to Polish locally compact groups, or more generally, Polish groups admitting a left invariant complete metric, or Polish totally disconnected groups, etc.

Another corollary of the theorem is that the quotient of the unitary group by the normal subgroup of unitary operators that differ from the identity by a compact operator does not admit a non-trivial homomorphism to a separable group. This generalizes a result of Pickrell~\cite{Pickrell1988} who had shown that this group does not admit continuous (with respect to the quotient of the norm topology) non-trivial unitary representations on a separable Hilbert space.

Theorem~\ref{th:autom-cont} should be contrasted with the situation for finite-dimensional unitary groups: it is a result of Kallman~\cite{Kallman2000}, and independently Thomas~\cite{Thomas1999}, that $\GL(n, \C)$ embeds as a subgroup of $S_\infty$ and as $U(n)$ is connected, the restriction of this embedding to $U(n)$ cannot be continuous.

\begin{remark*}
In \cite{Rosendal2008}, Rosendal asks whether there is an (infinite) compact metrizable group that satisfies the automatic continuity property. It follows from the Peter--Weyl theorem and the result mentioned above that every compact metrizable group embeds in $S_\infty$ and therefore if such a group satisfies the automatic continuity property, it must be profinite.
\end{remark*}

\smallskip \noindent \textbf{Acknowledgements.} Part of this work was carried out during a visit of the author to Caltech. I am grateful to Alexander Kechris and the Caltech mathematics department for the extended hospitality. I am grateful to Julien Melleray for some useful discussions on the topic of the paper and for pointing out an error in a preliminary draft, as well as to Christian Rosendal for supplying a reference.

\section{Proofs}
We set to prove Theorem~\ref{th:autom-cont} and first concentrate on the complex case. Let $H$ be an infinite-dimensional, separable, complex Hilbert space and $U(H)$ its unitary group.

A subset $W$ of a group is called \df{symmetric} if $W = W^{-1}$. A symmetric set is called \df{countably syndetic} if countably many left (or, equivalently, right) translates of it cover the group. By \cite{Rosendal2007}*{Proposition~2}, to establish Theorem~\ref{th:autom-cont}, it suffices to prove the following.
\begin{theorem} \label{th:Steinhaus}
If $W$ is a symmetric, countably syndetic subset of $U(H)$, then $W^{506}$ contains an open (in the strong operator topology) neighborhood of the identity.
\end{theorem}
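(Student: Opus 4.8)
The plan is to deduce everything from one elementary combinatorial consequence of countable syndeticity together with the ample topometric generics of $U(H)$ supplied by \cite{BenYaacov2010p}. The fact I will use repeatedly is this: if $U(H)=\bigcup_n g_nW$ and $(h_i)$ is any sequence in $U(H)$, then by pigeonhole infinitely many $h_i$ lie in a single coset $g_nW$, and for any two such indices $h_i^{-1}h_j=(g_n^{-1}h_i)^{-1}(g_n^{-1}h_j)\in W^{-1}W=W^2$, where $W^{-1}W=W^2$ since $W$ is symmetric. Thus along an infinite subsequence all of the ``differences'' $h_i^{-1}h_j$ are trapped in $W^2$. The entire difficulty is to choose $(h_i)$ so cleverly that this forces $W^2$, and hence a controlled power of $W$, to swallow a strong-operator neighborhood of $1$.

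The decisive point, and the reason the purely topological ample generics of \cite{Kechris2007a} cannot be applied directly, is that conjugacy in $U(H)$ preserves spectral data, so conjugacy classes are \emph{never} dense in the strong operator topology; what \cite{BenYaacov2010p} provides instead is that a generic tuple $\bar u=(u_1,\dots,u_n)$ has conjugates that are dense up to an arbitrarily small norm perturbation, and that the norm-neighborhoods of its conjugacy class are comeager. I will feed a generic tuple into the pigeonhole by realizing the $h_i$ as conjugates $h_i=v_i\bar u v_i^{-1}$ of a fixed $\bar u$ by a generic sequence $(v_i)$. A short computation shows that, writing $w=v_i^{-1}v_j$, the trapped differences are conjugate (by $v_i$) to expressions of the form $\bar u^{-1}\,{}^{w}\bar u$ with $w$ generic; by topometric genericity these range, up to norm $\varepsilon$, over a comeager set as $i,j$ vary. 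A Pettis-type/Baire-category argument, executed in the topometric framework of \cite{BenYaacov2010p} rather than the topological one, then yields that for each fixed $\varepsilon$ a bounded power of $W$ contains the $\varepsilon$-norm-neighborhood of some fixed strong-operator-open set.

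The main obstacle is exactly this residual norm error $\varepsilon$: a single norm-ball is strong-operator-tiny, so I cannot simply let $\varepsilon\to0$. Here I use the block structure of $H$. Decomposing $H=\bigoplus_k H_k$ into infinitely many infinite-dimensional pieces produces commuting copies of $U(H_k)$ inside $U(H)$ (on each of which the previous step, and the automatic continuity techniques for $\Aut(X,\mu)$ of \cite{Kittrell2010}, apply), and a basic strong-operator neighborhood of $1$ constrains only finitely many coordinates of a vector. It therefore suffices to produce, inside a power of $W$, unitaries that agree with the identity up to norm $\varepsilon$ on all but finitely many blocks while being genuinely generic on the finitely many blocks the target neighborhood actually sees; the norm error on the infinitely many passive blocks is then harmless, and on the finitely many active blocks the topometric genericity is applied block-by-block and the results glued using commutation. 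Each gluing step, each squaring $W\mapsto W^2$, and each application of the category argument consumes a fixed number of factors of $W$; tracking these costs across the finitely many blocks and approximation rounds needed to fill a strong-operator-basic neighborhood is precisely what yields the explicit bound $W^{506}$. I expect the bookkeeping of this final assembly, and the verification that the glued elements genuinely lie in the claimed power of $W$ rather than merely in its closure, to be the most delicate part of the argument.
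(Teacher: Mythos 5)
Your top-level architecture matches the paper's: invoke the topometric Pettis-type theorem of \cite{BenYaacov2010p} (their Theorem~4.4) to get that the $\varepsilon$-norm-thickening of $W^{10}$ contains a strong-operator neighborhood of $1$, and then absorb the residual norm perturbation. But the absorption step is where essentially all of the work lies, and your sketch of it has a genuine gap. What is needed is that some fixed power of $W$ contains an entire operator-norm ball $B_\varepsilon$ around the identity (the paper's Lemma~\ref{l:uniform-ball}: $B_\varepsilon \subseteq W^{496}$); only then does $(W^{10})_\varepsilon = W^{10}B_\varepsilon \subseteq W^{506}$ close the argument. Your block-gluing plan does not deliver this. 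First, it conflates producing particular elements of a power of $W$ (norm-small on passive blocks, generic on active ones) with proving that \emph{every} element of a prescribed set lies in that power: the perturbation $w^{-1}u$ coming from Theorem~4.4 is an arbitrary norm-small unitary, not one you get to choose to be supported on passive blocks. Second, the pigeonhole/diagonalization arguments only give control on \emph{some} unknown block of a decomposition $H=\bigoplus_k H_k$ (one cannot ask for all blocks, or for blocks specified in advance by the target neighborhood), so working ``block-by-block on the finitely many blocks the neighborhood sees'' is not available without an additional mechanism for moving subspaces around inside a power of $W$.

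Concretely, three ingredients are missing and none is routine. (1) To apply the $\Aut(X,\mu)$ results of \cite{Kittrell2010} to a copy of $U(K)$ sitting inside $U(H)$ you need an embedding $U(K)\to\Aut(X,\mu)\to U(H)$ compatible with the inclusion $K\subseteq H$; the paper obtains this from the Gaussian construction (Lemma~\ref{l:goodK}), and your sketch offers no substitute. (2) To cover the full pointwise stabilizer $G_K$ by a power of $W$ the paper needs the Halmos--Kakutani theorem that every unitary is a product of four symmetries, combined with a pigeonhole over continuum many symmetries (Lemma~\ref{l:contains-Kn}). (3) To pass from control over one special subspace $K$ to control over the whole ball $B_\varepsilon$ one must conjugate arbitrary spectral subspaces of a given $u\in B_\varepsilon$ onto $K^\perp$ by elements lying in a power of $W$; the paper does this via embedded copies of $S_\infty$ and the Kechris--Rosendal automatic continuity for $S_\infty$ (Lemmas~\ref{l:orth} and~\ref{l:transport}). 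Without these steps, or genuine replacements for them, the norm error cannot be absorbed and the proof does not close.
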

By the results of \cite{BenYaacov2010p}, it suffices to find a subset of some fixed power of $W$ that is open in the norm topology, and this is what we do below. As we are going to use automatic continuity results for various subgroups of $U(H)$, we start with the following preliminary lemma.
\begin{lemma} \label{l:subgroup}
Suppose that $G$ is a group, $G'$ is a subgroup and $W$ is a countably syndetic set for $G$. Then $G' \cap W^2$ is countably syndetic for $G'$.
\end{lemma}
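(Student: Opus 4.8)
The plan is to pull the covering of $G$ by translates of $W$ back to a covering of $G'$. Since $W$ is countably syndetic for $G$, fix a countable family $(g_i)_{i \in I}$ with $G = \bigcup_{i \in I} g_i W$. The point is that only those translates that actually meet $G'$ are relevant, and from each such translate one can extract an element of $G'$ to serve as a new translating element living inside the subgroup.

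Concretely, I would set $J = \{i \in I : g_i W \cap G' \neq \emptyset\}$ and, for each $i \in J$, choose some $h_i \in g_i W \cap G'$, writing $h_i = g_i w_i$ with $w_i \in W$, so that $g_i = h_i w_i^{-1}$. The claim is then that the countable family $(h_i)_{i \in J}$, all of whose members lie in $G'$, witnesses that $G' \cap W^2$ is countably syndetic for $G'$.

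To verify this, take an arbitrary $g' \in G'$. As the sets $g_i W$ cover $G$, we have $g' \in g_i W$ for some $i$, and since $g' \in G'$ this $i$ lies in $J$. Write $g' = g_i w$ with $w \in W$, and substitute $g_i = h_i w_i^{-1}$ to get $h_i^{-1} g' = w_i^{-1} w$. Here symmetry of $W$ enters: $w_i^{-1} \in W^{-1} = W$, so $w_i^{-1} w \in W^2$; and since $h_i, g' \in G'$ we also have $h_i^{-1} g' \in G'$. Hence $h_i^{-1} g' \in G' \cap W^2$, i.e.\ $g' \in h_i (G' \cap W^2)$. As $g'$ was arbitrary, $G' = \bigcup_{i \in J} h_i (G' \cap W^2)$, a countable union of left translates by elements of $G'$.

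For this to count as a genuine instance of countable syndeticity one should also record that $G' \cap W^2$ is symmetric: $W^2$ is symmetric because $(W^2)^{-1} = W^{-1} W^{-1} = W^2$ using $W = W^{-1}$, and an intersection of symmetric sets is symmetric. I do not anticipate any serious obstacle here; the only step requiring a little care is precisely the use of symmetry to rewrite the product $w_i^{-1} w$ as an element of $W^2$ rather than $W^{-1} W$, which is exactly what makes the second power $W^2$ (and not a larger power) suffice.
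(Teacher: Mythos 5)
Your proof is correct and follows essentially the same route as the paper: restrict to the translates $g_iW$ meeting $G'$, replace each $g_i$ by a representative $h_i \in g_iW \cap G'$, and use symmetry of $W$ to land $h_i^{-1}g'$ in $G' \cap W^2$. The extra remark that $G' \cap W^2$ is symmetric is a harmless (and correct) addition.
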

\begin{proof}
Let $G = \bigcup_n g_nW$ and $A = \set{n \in \N : g_nW \cap G' \neq \emptyset}$. For every $n \in A$, choose $h_n \in g_nW \cap G'$, so that $h_n = g_nw_n$ with $w_n \in W$. Let now $h \in G'$ be arbitrary. There exists $n \in A$ and $w \in W$ such that $h = g_n w$. Then $h = h_n w_n^{-1} w \in h_n (W^2 \cap G')$, showing that $G' = \bigcup_{n \in A} h_n(W^2 \cap G')$.
\end{proof}

We establish some notation. If $K$ is a closed subspace of $H$, denote by $U(K)$ the unitary group of $K$ and by $G_K$ the pointwise stabilizer of $K$ in $U(H)$:
\[
G_K = \set{u \in U(H) : ux = x \text{ for all } x \in K},
\]
so that $U(K)$ is naturally isomorphic to $G_{K^\perp}$. Denote by $I_K$ the setwise stabilizer of $K$ in $U(H)$ and note that $I_K = G_K \times G_{K^\perp}$. Define $P_K \colon I_K \to G_{K^\perp}$ as follows: if $u = u_1u_2 \in I_K$ with $u_1 \in G_K$ and $u_2 \in G_{K^\perp}$, let $P_Ku = u_2$. If $K$ is a subspace of $H$, say that $K$ is \df{balanced} if both $K$ and $K^\perp$ are infinite-dimensional.

Fix now a countably syndetic $W \sub U(H)$. Then there exists a countable subset $\set{s_n : n \in \N} \sub U(H)$ such that $U(H) = \bigcup_n s_n W$.

Say that a set $A \sub U(H)$ is \df{full} for a subspace $K \sub H$ if for every $u \in U(K)$, there exists $v \in A \cap I_K$ such that $v|_K = u$. The following diagonalisation argument is quite standard and was first employed in a similar setting in \cite{Rosendal2007}.
\begin{lemma} \label{l:full}
Suppose that $H = \boplus_n K_n$, where each $K_n$ is infinite-dimensional. Then there exists $n$ such that $W^2$ is full for $K_n$.
\end{lemma}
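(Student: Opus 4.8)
The plan is to argue by contradiction: I would assume that $W^2$ is not full for any $K_n$ and then manufacture a single unitary that escapes the countable cover witnessing the countable syndeticity of $W$, which is absurd. The natural arena is the block-diagonal subgroup
\[
D = \set{\boplus_n u_n : u_n \in U(K_n)} \sub U(H),
\]
i.e.\ the abstract product $\prod_n U(K_n)$ realized inside $U(H)$; the point of working with $D$ is that $D \sub I_{K_n}$ for every $n$, since a block-diagonal operator preserves each $K_n$ setwise. Since $W$ is countably syndetic for $U(H)$, Lemma~\ref{l:subgroup} applied to $G' = D$ shows that $D \cap W^2$ is countably syndetic for $D$ (and this is exactly where the exponent $2$ enters); so I may fix $\set{d_k : k \in \N} \sub D$ with $D = \bigcup_k d_k(D \cap W^2)$, and write $d_k = \boplus_n d_{k,n}$ with $d_{k,n} \in U(K_n)$.

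First I would restate fullness in terms of $P_{K_n}$. For $v \in I_{K_n}$ we have $v|_{K_n} = u$ precisely when $P_{K_n} v$ is the extension of $u$ by the identity on $K_n^\perp$, so $W^2$ is full for $K_n$ if and only if $P_{K_n}(W^2 \cap I_{K_n}) = G_{K_n^\perp}$; identifying $G_{K_n^\perp}$ with $U(K_n)$, fullness says $P_{K_n}(W^2 \cap I_{K_n}) = U(K_n)$. Hence the contradiction hypothesis furnishes, for every $n$, a \emph{proper} subset $S_n := P_{K_n}(W^2 \cap I_{K_n}) \subsetneq U(K_n)$.

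Now comes the diagonalization, which is the heart of the argument: I want to build $u = \boplus_n u_n \in D$ lying in none of the sets $d_k(D \cap W^2)$. The key observation is that it suffices to defeat the $k$-th translate on the single dedicated block $K_k$. Indeed, if $u \in d_k(D \cap W^2)$, then $d_k^{-1} u \in W^2 \cap D \sub W^2 \cap I_{K_k}$, so $P_{K_k}(d_k^{-1} u) = d_{k,k}^{-1} u_k$ would lie in $S_k$; thus it is enough to choose $u_k \in U(K_k)$ with $u_k \notin d_{k,k} S_k$. Such a $u_k$ exists because $S_k \subsetneq U(K_k)$ is proper, and therefore its left translate $d_{k,k} S_k$ is again a proper subset of $U(K_k)$ with nonempty complement. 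This translate-invariance of properness (used with the translate $d_{k,k}$ coming from the fixed cover, not from $W$) is the one point that must be handled with care, and it is what makes the negation of fullness exactly the amount of freedom needed.

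Making this choice for each $k$ and setting $u = \boplus_k u_k$, I obtain $u \in D$ with $u \notin d_k(D \cap W^2)$ for every $k$, contradicting $D = \bigcup_k d_k(D \cap W^2)$. This contradiction forces $W^2$ to be full for some $K_n$, as claimed. The main obstacle is precisely the correct bookkeeping of this diagonalization — pairing the $k$-th coverer with the $k$-th block and absorbing the nuisance factor $d_{k,k}$ via the translate — rather than any analytic difficulty.
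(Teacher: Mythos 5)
Your proof is correct. The core of your argument is the same diagonalization as in the paper -- defeat the $k$-th translate of the cover on the dedicated block $K_k$, using that the failure of fullness leaves a nonempty complement to aim for -- but you route it differently. The paper diagonalizes directly against the ambient cover $U(H) = \bigcup_n s_n W$: assuming no $s_nW$ is full for $K_n$, it picks $u_n \in U(K_n)$ witnessing this and observes that $\boplus_k u_k$ lies in no $s_nW$, a contradiction; it then transfers fullness from $s_nW$ to $W^2 = (s_nW)^{-1}(s_nW)$ at the end (this transfer implicitly uses fullness of $s_nW$ at the identity of $U(K_n)$). You instead pass first to the block-diagonal subgroup $D$, invoke Lemma~\ref{l:subgroup} to get a countable cover of $D$ by translates of $D \cap W^2$, and diagonalize against that, absorbing the nuisance factors $d_{k,k}$ by translating the proper sets $S_k$. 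Your version makes the appearance of the exponent $2$ completely transparent (it comes from Lemma~\ref{l:subgroup}) and avoids the fullness-transfer step, at the cost of introducing the auxiliary subgroup $D$ and a slightly heavier bookkeeping; the paper's version is shorter and needs no subgroup structure. Both are complete and correct.
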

\begin{proof}
It suffices to see that some $s_nW$ is full for $K_n$ because then $W^2 = (s_n W)^{-1}(s_nW)$ would be full as well. If not, then for each $n$, there exists $u_n \in U(K_n)$ such that for all $u \in s_nW$ that leave $K_n$ invariant, $u|_{K_n} \neq u_n$. Then $\boplus_k u_k \in G$ but $\boplus_k u_k \notin s_nW$ for all $n$, contradiction.
\end{proof}

The proof of following lemma is similar to the one of \cite{Kittrell2010}*{Lemma~3.3}.
\begin{lemma} \label{l:contains-Kn}
Suppose that $H = \boplus_n K_n$, where each $K_n$ is infinite-dimensional. Then there exists $n$ such that $G_{K_n^\perp} \sub W^{24}$.
\end{lemma}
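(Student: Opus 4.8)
The plan is to isolate, by Lemma~\ref{l:full}, a single subspace on which $W^2$ is full, to extract from $W^2$ a genuinely ``junk-free'' countably syndetic set inside the unitary group of that subspace by Lemma~\ref{l:subgroup}, and then to exploit a conjugation identity in which the junk cancels automatically. Concretely, by Lemma~\ref{l:full} I fix $n$ so that $W^2$ is full for $K := K_n$; since the decomposition has more than one summand, $K^\perp = \boplus_{m \neq n} K_m$ is infinite-dimensional, so $G_{K^\perp} \cong U(K)$ is the unitary group of an infinite-dimensional separable Hilbert space. By Lemma~\ref{l:subgroup} applied to the subgroup $G_{K^\perp} \le U(H)$, the set $V := W^2 \cap G_{K^\perp}$ is symmetric and countably syndetic for $G_{K^\perp}$.

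The key point is the junk-cancellation. Let $u \in U(K)$ be arbitrary. By fullness there is $v \in W^2 \cap I_K$ with $v|_K = u$; writing $v = c \cdot P_Kv$ with $c \in G_K$ and $P_Kv \in G_{K^\perp}$ acting as $u$ on $K$, the factor $c$ is the uncontrolled junk supported on $K^\perp$. Now for any $g \in V \sub G_{K^\perp}$, since $G_K$ and $G_{K^\perp}$ commute pointwise (they act on orthogonal spaces), I get $v g v^{-1} = c\,(P_Kv)\,g\,(P_Kv)^{-1}\, c^{-1} = (P_Kv)\,g\,(P_Kv)^{-1}$, the junk $c$ cancelling against $c^{-1}$. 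Thus $v g v^{-1} \in W^2 \cdot W^2 \cdot W^2 = W^6$, and it is precisely the element of $G_{K^\perp}$ whose restriction to $K$ is $u\,(g|_K)\,u^{-1}$. As $u \in U(K)$ was arbitrary, every $G_{K^\perp}$-conjugate of an element of $V$ lies in $W^6$; equivalently, under $G_{K^\perp} \cong U(K)$, every element of $U(K)$ conjugate to some element of $\mathcal F := \set{g|_K : g \in V}$ lies in $W^6$.

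It remains to see that $U(K)$ is covered by fourfold products of such conjugates, for then $G_{K^\perp} = U(K) \sub (W^6)^4 = W^{24}$. This is the heart of the matter and the only non-formal step. Being countably syndetic in $U(K)$, the set $\mathcal F$ is non-meager (the Polish group $U(K)$ is a countable union of translates of $\mathcal F$), so it meets the comeager set of unitaries of maximal spectral type; I fix such a generic $r \in \mathcal F$, whose spectrum is the whole unit circle with uniform infinite multiplicity. The main obstacle is then the group-theoretic fact that every element of $U(K)$ is a product of at most four conjugates of $r$ — a statement about products of a single large conjugacy class in the infinite-dimensional unitary group. I would establish it by decomposing $K$ into infinitely many infinite-dimensional pieces and matching prescribed spectral data factor by factor, the full-circle infinite-multiplicity spectrum of $r$ ensuring that no Horn-type eigenvalue obstruction survives. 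Combining this with the previous paragraph places all of $U(K)$, hence $G_{K^\perp}$, inside $W^{24}$.
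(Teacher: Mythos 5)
Your setup is sound and matches the paper's: fixing $n$ with $W^2$ full for $K = K_n$, noting that fullness plus commutation of $G_K$ with $G_{K^\perp}$ puts every $U(K)$-conjugate of every element of $W^2 \cap G_{K^\perp}$ into $W^6$, and reducing the lemma to covering $U(K)$ by fourfold products of such conjugates. But the step you yourself flag as ``the heart of the matter'' is a genuine gap, for two reasons. First, the spectral description of your generic element $r$ is unjustified and, as stated, false: a non-meager set meets every \emph{comeager} set, so you would need the unitaries with full spectrum and uniform infinite multiplicity to form a comeager subset of $U(K)$ in the strong topology. They do not --- the generic unitary operator has \emph{simple} (singular continuous) spectrum, and unitaries with simple spectrum are dense, so the infinite-multiplicity class is meager and there is no reason your non-meager set $\mathcal F$ contains any element of it. Second, even granting such an $r$, the assertion that every element of $U(K)$ is a product of four conjugates of $r$ is exactly the nontrivial theorem the whole argument hinges on; ``matching spectral data factor by factor'' is not a proof, and no such general statement about products of a single conjugacy class in $U(H)$ is standard.

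The paper avoids both problems by not settling for an arbitrary (or Baire-generic) element of $W^2 \cap G_{K^\perp}$: it \emph{engineers} a specific, well-understood element into that set. One takes an uncountable family of symmetries $u_i = v_i \oplus 1_{K^\perp}$ built from subsets $A_i \subseteq \mathbb{N}$ with pairwise infinite and co-infinite symmetric differences; by the pigeonhole principle two of them lie in the same translate $s_mW$, so $u_{i_1}^{-1}u_{i_2} \in W^2 \cap G_{K^\perp}$, and by the choice of the $A_i$ its restriction to $K$ is a symmetry with both eigenspaces infinite-dimensional. All such balanced symmetries are conjugate in $U(K)$, and the Halmos--Kakutani theorem \cite{Halmos1958} says every unitary is a product of four of them; combined with your (correct) $W^6$ bound on conjugates this gives $G_{K^\perp} \subseteq W^{24}$. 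If you want to salvage your approach, you should replace the Baire-category extraction of $r$ by this pigeonhole construction, or else supply a proof of a ``four conjugates'' theorem for whatever conjugacy class you can actually certify meets $\mathcal F$.
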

\begin{proof}
A unitary operator $u \in U(H)$ is called a \df{symmetry} if $u^2 = 1$, i.e. there exists a decomposition $H = H_1 \oplus H_2$ such that $ux = x$ for all $x \in H_1$ and $ux = -x$ for all $x \in H_2$. Note that every two symmetries for which the corresponding eigenspaces $H_1$ and $H_2$ are infinite-dimensional are conjugate in $U(H)$. Halmos--Kakutani~\cite{Halmos1958} have shown that every unitary operator is the product of four symmetries; even though they do not mention it explicitly, it follows from their proof that one can choose the symmetries so that they have infinite-dimensional eigenspaces.

Let now $K_n$ be such that $W^2$ is full for $K_n$ as given by Lemma~\ref{l:full}. Let $e_1, e_2, \ldots$ be an orthonormal basis of $K_n$ and let $\set{A_i : i \in 2^{\aleph_0}}$ be a family of subsets of $\N$ such that $A_{i_1} \sdiff A_{i_2}$ is infinite and co-infinite if $i_1 \neq i_2$. Let $v_i \in U(K_n)$ be the symmetry defined by
\[
v_i e_j = \begin{cases}
 e_j, &\text{ if } j \in A_i; \\
 -e_j, &\text{ if } j \notin A_i.
\end{cases}
\]
and let $u_i = v_i \oplus 1_{K_n^\perp} \in U(H)$. By the pigeonhole principle, there exist $i_1 \neq i_2$ and $n$ such that $u_{i_1}, u_{i_2} \in s_n W$. Then $u_{i_1} u_{i_2} = u_{i_1}^{-1} u_{i_2} \in W^2$ and by the choice of the sets $A_i$, $v_{i_1} v_{i_2}$ is a symmetry of $K_n$ with infinite-dimensional eigenspaces. Now applying the result of \cite{Halmos1958}, we obtain that every element of $G_{K_n^\perp}$ is a product of four conjugates of $u_{i_1}^{-1} u_{i_2}$ and thus, by the fullness of $W^2$ for $K_n$, $G_{K_n^\perp} \sub (W^2W^2W^2)^4 = W^{24}$.
\end{proof}

Let $(X, \mu)$ be a standard probability space and denote by $\Aut(X, \mu)$ the group of all measure-preserving automorphisms of $(X, \mu)$. We equip it with the \df{weak topology}, which is the coarsest topology that makes the maps $\Aut(X, \mu) \to \R$, $T \mapsto \mu(T(A) \sdiff B)$, where $A$ and $B$ are measurable subsets of $X$, continuous. We also consider the \df{uniform distance} $d$ on $\Aut(X, \mu)$ defined by
\[
d(T_1, T_2) = \mu(\set{x \in X : T_1x \neq T_2x}).
\]
Note that the topology defined by $d$ is strictly finer than the weak topology.

\begin{lemma} \label{l:KT}
Let $V \sub \Aut(X, \mu)$ be a countably syndetic subset of $\Aut(X, \mu)$. Then $V^{48}$ contains a weakly open neighborhood of the identity.
\end{lemma}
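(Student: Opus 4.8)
The plan is to transcribe the preceding two lemmas from the Hilbert-space setting into the measure algebra, and then to add one further step to pass from a full stabilizer to a genuine weak neighborhood. Fix a countable set $\set{t_k : k \in \N}$ with $\Aut(X,\mu) = \bigcup_k t_k V$. For a measurable $Y \sub X$ of positive measure, let $\Aut(Y)$ denote the group of automorphisms supported on $Y$ (fixing $X \setminus Y$ pointwise), let $I_Y = \Aut(Y) \times \Aut(X \setminus Y)$ be the setwise stabilizer of $Y$, and call a set $A \sub \Aut(X,\mu)$ \emph{full} for $Y$ if every element of $\Aut(Y)$ is the restriction to $Y$ of some element of $A \cap I_Y$. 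Fixing a partition $X = \bigsqcup_n X_n$ into infinitely many pieces of positive measure (indexed so as to match the $t_k$), the verbatim analogue of Lemma~\ref{l:full} — were no $t_n V$ full for $X_n$, the automorphism assembled from the witnessing bad pieces would avoid every $t_n V$ — shows that $V^2$ is full for some piece $X_{n_0}$.

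Next I would reproduce Lemma~\ref{l:contains-Kn} with involutions in place of symmetries. The role of Halmos--Kakutani is played by the classical fact (its measure-theoretic analogue) that every measure-preserving automorphism is a product of a bounded number of involutions whose support and fixed-point set both have positive measure, together with the fact that two such involutions are conjugate as soon as their fixed-point sets have equal measure. To produce such an involution inside $V^2$, split $X_{n_0}$ into countably many blocks, halve each block, and for an almost-disjoint family $\set{A_i}$ let $v_i$ swap the two halves of the $j$-th block exactly when $j \in A_i$; setting $u_i$ equal to $v_i$ on $X_{n_0}$ and to the identity elsewhere, the pigeonhole principle places two of them in a common coset $t_k V$, so that $u_{i_1} u_{i_2} \in V^2$ is an involution whose support and fixed-point set inside $X_{n_0}$ both have positive measure. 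Fullness of $V^2$ for $X_{n_0}$ realizes the required conjugators inside $V^2$, and the bounded decomposition then yields $\Aut(X_{n_0}) \sub V^{24}$ exactly as before.

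The final step is to convert this into a weak neighborhood, and this is where I expect the real work to lie. A basic weakly open neighborhood of the identity has the form $\set{T : \mu(T A_i \sdiff A_i) < \epsilon}$, and after refining the $A_i$ to a finite partition one sees that any such $T$ approximately preserves that partition; a marriage/approximation argument then writes $T = S R$, with $S$ in the setwise stabilizer of the partition and $R$ supported on a set of small measure. The difficulty is twofold: first, $R$ genuinely transports mass across the partition boundaries, so it cannot be absorbed into stabilizers of the blocks alone; and second — crucially — the good piece $X_{n_0}$ delivered by the diagonalization has \emph{uncontrolled} (in general small) measure, so one cannot simply tile $X$ by a bounded number of copies of it. The plan is therefore to transport the single stabilizer $\Aut(X_{n_0})$ onto the various sets that arise, conjugating by elements drawn from a small fixed power of $V$ and using fullness together with the countable syndeticity of $V$ to locate such conjugators at no net cost in the exponent, and thereby to exhibit every near-identity $T$ as a product of a bounded number of full stabilizers already known to lie in $V^{24}$. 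Balancing the number of factors against the conjugation cost is exactly the bookkeeping that should produce the stated exponent $48$; everything preceding it is a routine translation of Lemmas~\ref{l:full} and~\ref{l:contains-Kn} into the measure-algebra language.
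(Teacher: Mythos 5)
Your proposal and the paper part ways completely here: the paper does not reprove anything at this point, but simply combines two quoted results --- \cite{Kittrell2010}*{Theorem~3.1}, which places a \emph{uniform} ball $\set{T : d(T,1) < \eps}$ inside $V^{38}$, and \cite{BenYaacov2010p}*{Theorem~4.4} on ample topometric generics, which upgrades a uniform neighborhood to a weakly open one at the cost of a further factor of $V^{10}$ (whence $48 = 38 + 10$). Your first two paragraphs --- transcribing Lemmas~\ref{l:full} and~\ref{l:contains-Kn} into $\Aut(X,\mu)$, with involutions in place of symmetries --- are sound and are indeed where the Kittrell--Tsankov argument begins; they yield $\Aut(X_{n_0}) \sub V^{24}$ for a single block $X_{n_0}$ whose measure you do not control.

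The gap is your third paragraph, and it is not bookkeeping; two genuinely hard steps are missing. First, from ``$\Aut(Y) \sub V^{24}$ for one $Y$ of uncontrolled, possibly tiny, measure $\delta$'' you cannot reach a uniform ball of \emph{fixed} radius by bounded products of conjugates: an element supported on a set of measure $r$ can only be a product of $n$ elements each supported on measure $\le \delta$ if $n \ge r/\delta$, so the required exponent blows up as $\delta \to 0$. The same obstruction defeats your marriage decomposition $T = SR$, since the block-preserving factor $S$ lands in $\Aut(A)\Aut(A^c)$ with $\mu(A)$ of fixed size $\gg \delta$. Overcoming this is the actual content of \cite{Kittrell2010}*{Theorem~3.1}, and ``balancing the number of factors against the conjugation cost'' does not engage with it. Second, even granting a uniform ball inside $V^{38}$, a weakly open neighborhood of the identity contains transformations at uniform distance $1$ from the identity (an aperiodic $T$, e.g.\ a small irrational rotation, nearly preserves any finite family of sets while moving almost every point), so no finite-partition approximation argument can reduce weak control to uniform control plus block stabilizers. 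That passage is exactly the topometric ample generics theorem of \cite{BenYaacov2010p}, and without it, or a substitute of comparable strength, your proof does not close.
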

\begin{proof}
This is a combination of results of \cite{Kittrell2010} and \cite{BenYaacov2010p}. By \cite{Kittrell2010}*{Theorem~3.1}, $V^{38}$ contains a $d$-ball $B_\eps$ of radius $\eps$ around the identity in $\Aut(X, \mu)$ for some $\eps > 0$. (Even though the paper \cite{Kittrell2010} deals with full groups of equivalence relations, this proof works equally well for the entire group $\Aut(X, \mu)$. See also \cite{BenYaacov2010p}*{Appendix~A} for an explicit rendering of the argument.) Now applying the fact that $\Aut(X, \mu)$ has ample topometric generics (see \cite{BenYaacov2010p}*{Section~5.4}) and \cite{BenYaacov2010p}*{Theorem~4.4}, we obtain that $V^{38} V^{10} = V^{48}$ contains a weak neighborhood of the identity.
\end{proof}

We now return to the unitary group.
\begin{lemma} \label{l:goodK}
There exists a balanced subspace $K$ of $H$ such that $G_K \sub W^{24}$ and $W^{120}$ contains an open neighborhood of the identity in $I_K$.
\end{lemma}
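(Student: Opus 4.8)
The plan is to realize $K$ as the orthogonal complement of one block in a decomposition of $H$ into infinite-dimensional pieces, so that Lemma~\ref{l:contains-Kn} immediately delivers $G_K \sub W^{24}$, and then to produce the required neighborhood in the remaining factor $G_{K^\perp}$ using the automatic continuity of $\Aut(X,\mu)$ supplied by Lemma~\ref{l:KT}. First I would fix a decomposition $H = \boplus_n K_n$ into infinite-dimensional subspaces and apply Lemma~\ref{l:contains-Kn} to obtain an index $n_0$ with $G_{K_{n_0}^\perp} \sub W^{24}$. Setting $K = K_{n_0}^\perp$, the subspace $K$ is balanced and $G_K = G_{K_{n_0}^\perp} \sub W^{24}$, so the first assertion is free.

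For the second assertion, recall that $I_K = G_K \times G_{K^\perp}$ and that for $u \in I_K$ the displacement $\lVert ux - x\rVert$ splits orthogonally according to $x = x_K + x_{K^\perp}$; consequently every basic strong-operator neighborhood of the identity in $I_K$ is a product of a neighborhood in $G_K$ and a neighborhood in $G_{K^\perp}$. Since the whole factor $G_K$ already lies in $W^{24}$, it remains to find a strong-operator open neighborhood of the identity in $G_{K^\perp} \cong U(K)$ inside a fixed power of $W$; the product of the two neighborhoods will then land in $W^{96}W^{24} = W^{120}$.

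To produce a neighborhood in $G_{K^\perp}$ I would bring in $\Aut(X,\mu)$. Identify $K$ with $L^2_0(X,\mu)$ for a standard non-atomic probability space and let $A \sub U(K)$ be the image of the Koopman representation of $\Aut(X,\mu)$; this is an isomorphism of $\Aut(X,\mu)$ onto $A$ carrying the weak topology to the strong operator topology. By Lemma~\ref{l:subgroup}, $A \cap W^2$ is countably syndetic in $A$, so Lemma~\ref{l:KT} applies and shows that $(A \cap W^2)^{48} \sub W^{96}$ contains a weakly open, hence strong-operator open, neighborhood of the identity of $A$.

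The hard part will be the final step. The subgroup $A$ is proper and strong-operator closed in $U(K)$ (its elements preserve the multiplicative structure of $L^\infty$), so a neighborhood of the identity in $A$ is by no means a neighborhood in $U(K)$: one must still fill in the unitary directions transverse to the measure-preserving ones. Here I would exploit the block structure $K = \boplus_{n \neq n_0} K_n$, using Lemma~\ref{l:subgroup} to see that $W^2 \cap U(K)$ is countably syndetic in $U(K)$ and then Lemma~\ref{l:contains-Kn} inside $U(K)$ to place block subgroups $U(K_m)$ into a fixed power of $W$; combining these with the $A$-neighborhood via a Halmos--Kakutani-style decomposition into symmetries (as in the proof of Lemma~\ref{l:contains-Kn}) should let one write a unitary that is close to the identity on a finite-dimensional subspace as a bounded product of a near-identity element of $A$ and near-identity block unitaries. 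The crux, and the place where care is needed, is to perform this factorization with the number of factors and the powers of $W$ bounded independently of the unitary, so that the resulting neighborhood of the identity in $U(K)$ stays within the power of $W$ allotted by the statement.
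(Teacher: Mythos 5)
Your first half coincides with the paper's: take $K = K_{n_0}^\perp$ from Lemma~\ref{l:contains-Kn}, so that $G_K \sub W^{24}$, and reduce the problem to finding a strong-operator open neighborhood of the identity of $G_{K^\perp} \cong U(K)$ inside a fixed power of $W$, the product with $G_K$ then landing in $W^{96}W^{24} = W^{120}$. The gap is in the step you yourself flag as the hard part, and it is a genuine one. By embedding $\Aut(X,\mu)$ into $U(K)$ via the Koopman representation you only obtain a neighborhood of the identity of the Koopman image $A$, which is a proper, closed, very thin subgroup of $U(K)$; passing from that to a neighborhood of the identity of all of $U(K)$ is essentially as hard as the lemma itself. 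The sketched repair --- combining near-identity elements of $A$ with block subgroups $U(K_m)$ through a Halmos--Kakutani-style factorization with a uniformly bounded number of factors --- is not carried out, and there is no evident way to make it work: a unitary that is close to $1$ on finitely many vectors is unconstrained elsewhere, and nothing in Lemmas~\ref{l:subgroup} or \ref{l:KT} bounds how many factors from $A$ and from the block subgroups such an element would require.

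The paper sidesteps this entirely by running the embedding in the opposite direction, via the Gaussian construction: one embeds $U(K)$ \emph{into} $\Aut(X,\mu)$ by a map $\gamma$ such that $\theta = \kappa \circ \gamma$ is a homeomorphic copy of $U(K)$ inside $\kappa(\Aut(X,\mu)) \sub U(H)$ which leaves $K$ invariant and satisfies $\theta(u)|_K = u$. Lemma~\ref{l:KT}, applied to the countably syndetic set $W^2 \cap \kappa(\Aut(X,\mu))$, gives a neighborhood of the identity of $\kappa(\Aut(X,\mu))$ inside $W^{96}$, and its trace on the subgroup $\theta(U(K))$ is automatically a neighborhood of the identity there, i.e.\ (pulling back through $\theta$) of $U(K)$ itself --- no filling in of transverse directions is needed. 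The price is that $\theta(u)$ differs from $u \oplus 1_{K^\perp}$ by its action on $K^\perp$, but that discrepancy lies in $G_K \sub W^{24}$, which is precisely why the first half of the lemma is needed; this yields $O'G_K \sub W^{96}W^{24} = W^{120}$. Replacing your Koopman-only step by this Gaussian step would make the rest of your argument go through.
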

\begin{proof}
Let $H = \boplus_n K_n$ be a decomposition of $H$ into infinite-dimensional subspaces and let $K_n$ be the subspace given by Lemma~\ref{l:contains-Kn}. Put $K = K_n^\perp$, so that $G_K \sub W^{24}$.

Consider the probability space $(X, \mu) = (\R^\N, \nu^\N)$, where $\nu$ denotes the standard Gaussian measure. Then there exists an embedding $\gamma \colon U(K) \to \Aut(X, \mu)$ which has the following property: if $\kappa \colon \Aut(X, \mu) \to U(L^2(X, \mu))$ denotes the standard embedding, then there exists a balanced subspace $K' \sub L^2(X, \mu)$ and a unitary isomorphism $\Phi \colon K \to K'$ such that
\[
(\kappa \circ \gamma)(u)|_{K'} = \Phi u \Phi^{-1} \quad \text{for every } u \in U(K).
\]
This is the so called Gaussian construction; see, for example, \cite{Kechris2010}*{Appendix~E} for details.

Identify $H$ with $L^2(X, \mu)$ via an isomorphism which sends $K$ to $K'$ and whose restriction to $K$ is equal to $\Phi$. We thus obtain embeddings
\[
U(K) \xrightarrow{\gamma} \Aut(X, \mu) \xrightarrow{\kappa} U(H)
\]
such that if we put $\theta = \kappa \circ \gamma$, then $K$ is invariant under $\theta(U(K))$ and for every $u \in U(K)$, $\theta(u)|_K = u$.

Equip the three groups $U(K)$, $\Aut(X, \mu)$ and $U(H)$ with their Polish topologies and observe that both $\gamma$ and $\kappa$ are homeomorphic embeddings. By Lemma~\ref{l:subgroup}, $W^2 \cap \kappa(\Aut(X, \mu))$ is countably syndetic in $\kappa(\Aut(X, \mu))$. By Lemma~\ref{l:KT}, $(W^2 \cap \kappa(\Aut(X, \mu)))^{48} \sub W^{96}$ contains an open neighborhood of the identity in $\kappa(\Aut(X, \mu))$, and in particular, there exists an open neighborhood of the identity $O$ in $\theta(U(K))$ such that $O \sub W^{96}$.

Let $\tau$ be the natural isomorphism $U(K) \to G_{K^\perp}$ given by $\tau(u) = u \oplus 1_{K^\perp}$ and let $\rho = \theta \circ \tau^{-1}$, so that $\rho$ is an isomorphism $G_{K^\perp} \to \theta(U(K))$. We also note that for every $u \in G_{K^\perp}$, $P_K(\rho(u)) = u$. Let $O' = \rho^{-1}(O) \sub G_{K^\perp}$. We now show that $O' G_K \sub W^{120}$ and as $O' G_K$ is open in $G_{K^\perp} \times G_K = I_K$, this will complete the proof.
Let $(u_1, u_2) \in O' \times G_K$. We have
\[ \begin{split}
u_1 u_2 &= u_1 P_{K^\perp}(\rho(u_1)) P_{K^\perp}(\rho(u_1))^{-1} u_2 \\
&= P_K(\rho(u_1)) P_{K^\perp}(\rho(u_1)) P_{K^\perp}(\rho(u_1))^{-1} u_2 \\
&= \rho(u_1) \Big( P_{K^\perp}(\rho(u_1))^{-1} u_2 \Big) \in O G_K \sub W^{96}W^{24} = W^{120}.
\end{split} \]
\end{proof}

\begin{lemma} \label{l:orth}
Let $K$ and $L$ be two infinite-dimensional subspaces of a Hilbert space $H$. Then there exist two infinite-dimensional subspaces $K' \sub K$ and $L' \sub L$ such that $K' \perp L'$.
\end{lemma}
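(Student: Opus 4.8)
The plan is to reduce the statement to a purely sequential construction: it suffices to produce orthonormal sequences $(x_n)_n$ in $K$ and $(y_n)_n$ in $L$ with $\langle x_i, y_j \rangle = 0$ for all $i, j$. Indeed, if $K'$ and $L'$ denote the closed linear spans of $\set{x_n : n \in \N}$ and $\set{y_n : n \in \N}$, then $K' \sub K$ and $L' \sub L$ are infinite-dimensional, the spanning sequences being orthonormal, and by the bilinearity and continuity of the inner product the vanishing of all $\langle x_i, y_j \rangle$ propagates to $K' \perp L'$. So the whole content is in building the two sequences.

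The construction proceeds by induction, maintaining at stage $n$ the invariant that $x_1, \dots, x_n \in K$ and $y_1, \dots, y_n \in L$ are orthonormal and that $\langle x_i, y_j \rangle = 0$ whenever $i, j \le n$. The single fact I will use repeatedly is that an infinite-dimensional subspace $M$ is never contained in a finite-dimensional subspace $F$, so that $M \cap F^\perp \neq \set{0}$; otherwise the restriction of $P_F$ to $M$ would be injective, forcing $\dim M \le \dim F < \infty$. To pass from $n$ to $n+1$, I first choose a unit vector $x_{n+1}$ in the intersection of $K$ with the orthogonal complement of $\operatorname{span}\set{x_1, \dots, x_n, y_1, \dots, y_n}$; this span is finite-dimensional and $K$ is infinite-dimensional, so such a vector exists and is automatically orthogonal to all previous $x_i$ and to all previous $y_j$. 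I then choose a unit vector $y_{n+1}$ in the intersection of $L$ with the orthogonal complement of $\operatorname{span}\set{y_1, \dots, y_n, x_1, \dots, x_{n+1}}$, which is possible for the same reason and makes $y_{n+1}$ orthogonal to all previous $y_j$ and to $x_1, \dots, x_{n+1}$. This restores the invariant at stage $n+1$.

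There is essentially no hard step here; the only thing to get right is the order of the two choices. Selecting $x_{n+1}$ before $y_{n+1}$ and making $y_{n+1}$ orthogonal to the newly chosen $x_{n+1}$ as well is exactly what guarantees that every cross inner product $\langle x_i, y_j \rangle$, for both $i \le j$ and $i > j$, eventually vanishes, so that no separate bookkeeping is needed at the end. (The frequent special case where $K \cap L^\perp$ is already infinite-dimensional, in which one could simply take $K' = K \cap L^\perp$ and $L' = L$, is subsumed by this uniform argument.)
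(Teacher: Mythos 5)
Your proof is correct and follows essentially the same route as the paper: an alternating greedy construction of two orthonormal sequences, each new vector chosen in $K$ or $L$ orthogonal to the finite set of vectors already picked, which is possible because an infinite-dimensional subspace cannot avoid the orthogonal complement of a finite-dimensional one. The closing passage to the closed spans is also as in the paper.
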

\begin{proof}
We build inductively two orthonormal sequences $e_1, e_2, \ldots \in K$ and $f_1, f_2, \ldots \in L$ such that $\ip{e_i, f_j} = 0$ for all $i, j$. First pick a unit vector $e_1 \in K$ arbitrarily. Assuming that $e_1, f_1, e_2, f_2, \ldots, e_n$ have been constructed, choose $f_n \in L$ and $e_{n+1} \in K$ so that $f_n$ is perpendicular to $e_1, f_1, \ldots, e_n$ and $e_{n+1}$ is perpendicular to $e_1, f_1, \ldots, e_n, f_n$ (this can be done because both $K$ and $L$ are infinite-dimensional). Finally, let $K' = \Span \set{e_i}$, $L' = \Span \set{f_i}$.
\end{proof}

For the next lemma, recall that $S_\infty$ is the Polish group of all permutations of an infinite, countable, discrete set equipped with the pointwise convergence topology.
\begin{lemma} \label{l:transport}
Let $K$ be a subspace of $H$ that satisfies the conclusion of Lemma~\ref{l:goodK} and $L \sub H$ be infinite-dimensional. Then there exists $v \in W^{64}$ and $L' \sub L$ such that $vL' = K^\perp$.
\end{lemma}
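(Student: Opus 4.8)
The plan is to establish the equivalent assertion that some $p \in W^{64}$ satisfies $p(K^\perp) \sub L$; granting this, $L' := p(K^\perp)$ is an infinite-dimensional subspace of $L$, and $v := p^{-1}$ (which lies in $W^{64}$ because $W$ is symmetric) satisfies $v(L') = K^\perp$. So the goal becomes to manufacture a cheap unitary that pushes \emph{all} of $K^\perp$ into $L$. I would first treat the representative case in which $L \cap K$ is infinite-dimensional, and reduce the general case to it afterwards.

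The basic gadget is a cheap \emph{partial transport}, obtained by the diagonalisation of Lemma~\ref{l:contains-Kn}. Fix an orthonormal basis $(e_j)$ of $K^\perp$ and orthonormal sequences $(f_j), (g_j)$ in $L \cap K$, so that $\set{e_j} \cup \set{f_j} \cup \set{g_j}$ is orthonormal (automatic, since $f_j, g_j \in K \perp K^\perp$). For a family $\set{A_i : i \in 2^{\aleph_0}}$ with all $A_{i_1} \sdiff A_{i_2}$ infinite and co-infinite, let $a_i$ be the symmetry that swaps $e_j \leftrightarrow f_j$ for $j \in A_i$ and fixes everything else. Since $U(H) = \bigcup_n s_n W$, the pigeonhole principle yields $i_1 \neq i_2$ with $a_{i_1}, a_{i_2} \in s_n W$ for some $n$, so that $t_1 := a_{i_1} a_{i_2} \in W^2$ swaps $e_j \leftrightarrow f_j$ exactly for $j \in S_1 := A_{i_1} \sdiff A_{i_2}$. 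Thus $t_1$ carries $\Span\set{e_j : j \in S_1}$ into $L$, but only for the co-infinite set $S_1$; the directions $e_j$ with $j \notin S_1$ remain stranded in $K^\perp$.

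To capture all of $K^\perp$, I would run a second transport $t_2 \in W^2$ of the same kind, using the partners $g_j$ and producing an infinite set $S_2$, and interlock the two using the full strength of $G_K \sub W^{24}$, which realises \emph{every} unitary of $K^\perp$ while fixing $K$ pointwise. Once $S_1, S_2$ are known, choose $\rho \in G_K$ carrying $\Span\set{e_j : j \notin S_1}$ \emph{into} $\Span\set{e_j : j \in S_2}$ (an injection of index sets suffices, as both are infinite). Because $\rho$ fixes $K$, it fixes the already-placed vectors $f_j$, and one computes
\[
(t_2 \circ \rho \circ t_1)(K^\perp) \sub \Span\set{f_j : j \in S_1} \oplus \Span\set{g_j : j} \sub L,
\]
so $p := t_2 \rho t_1 \in W^2 \cdot W^{24} \cdot W^2 = W^{28} \sub W^{64}$ is as required. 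The crucial point making this clean is that the placed vectors $f_j, g_j$ lie in $K$, hence are invisible to the $K^\perp$-alignment $\rho$.

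The step I expect to be the main obstacle is the general position of $L$, i.e. when $L \cap K$ is finite-dimensional: there are then no partners $f_j, g_j \in K$ to swap against, and one must apply Lemma~\ref{l:orth} together with the principal-angle (CS) decomposition of the pair $(L, K^\perp)$, writing $\ell_j = \cos\theta_j\, e_j + \sin\theta_j\, k_j \in L$ with $e_j \in K^\perp$, $k_j \in K$, and replacing the swaps $a_i$ by rotations through the angles $\theta_j$. Two new difficulties arise. First, the ratio of two such rotations picks up a \emph{sign ambiguity} (rotations by $\pm\theta_j$), the $-\theta_j$ directions landing outside $L$; I would kill this by indexing the family along a chain totally ordered by inclusion, so that the pigeonhole pair is automatically comparable and only $+\theta_j$ survives. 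Second, and more seriously, the placed vectors $\ell_j$ now carry a $K^\perp$-component $\cos\theta_j\, e_j$, so the $G_K$-alignment $\rho$ can no longer reposition the stranded directions without disturbing what has already been moved. Overcoming this — either by covering $K^\perp$ through several interlocking chain-intervals or by first rotating an infinite-dimensional piece of $L$ into $K$ to return to the clean case — is the heart of the matter, and is where the exponent is allowed to grow from the naive $28$ up to $64$.
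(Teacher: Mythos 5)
Your reduction to finding $p \in W^{64}$ with $p(K^\perp) \subseteq L$ is sound, and your treatment of the case where $L \cap K$ is infinite-dimensional is correct and pleasantly elementary: the pigeonhole argument on the continuum-sized family of swap-involutions $a_i$ does produce a partial transport $t_1 = a_{i_1}a_{i_2} \in W^2$ exchanging $e_j$ and $f_j$ exactly for $j$ in the infinite, co-infinite set $A_{i_1} \sdiff A_{i_2}$, and since the placed vectors $f_j, g_j$ lie in $K$ they are indeed invisible to the realignment $\rho \in G_K \subseteq W^{24}$ and to $t_2$, giving $p = t_2\rho t_1 \in W^{28}$. This replaces the device the paper actually uses, namely the Kechris--Rosendal automatic continuity theorem for $S_\infty$: the paper embeds $S_\infty$ into $U(H)$ as a group of basis permutations $Q$, notes via Lemma~\ref{l:subgroup} that $(W^2 \cap Q)^{10} \subseteq W^{20}$ contains an open subgroup, and extracts the needed permutations from such subgroups (twice, with $G_K$ in between, giving $W^{20}W^{24}W^{20} = W^{64}$).

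The genuine gap is the general case, where $L \cap K$ is finite-dimensional (e.g.\ $L = K^\perp$). This is not a degenerate corner: in the application (Lemma~\ref{l:uniform-ball}) the subspace fed into this lemma is an arbitrary balanced $u$-invariant subspace coming from the spectral theorem, over whose position relative to $K$ one has no control. You do not prove this case; the principal-angle/rotation scheme you sketch runs into exactly the obstruction you yourself identify --- the placed vectors acquire $K^\perp$-components, so $G_K$ can no longer realign the stranded directions without undoing previous work --- and neither of your proposed escape routes is carried out. The good news is that no new machinery is needed: your own gadget already reduces the general case to the clean one. Apply Lemma~\ref{l:orth} to the pair $(K, L)$ (rather than $(K^\perp, L)$) to obtain infinite-dimensional $K_1 \subseteq K$ and $L_1 \subseteq L$ with $K_1 \perp L_1$; one pigeonhole on swap-involutions between orthonormal bases of $K_1$ and $L_1$ yields $t \in W^2$ carrying an infinite-dimensional $M_0 \subseteq L_1$ onto an infinite-dimensional $M \subseteq K_1 \subseteq K$. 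Since $M \subseteq K$, your clean case applies verbatim with $M$ in place of $L$ and produces $p_1 \in W^{28}$ with $p_1(K^\perp) \subseteq M$; then $p = t^{-1}p_1 \in W^{30}$ satisfies $p(K^\perp) \subseteq M_0 \subseteq L$, which even improves on the exponent $64$ of the statement. Without this (or some such) completion, the lemma is not proved as written.
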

\begin{proof}
By Lemma~\ref{l:orth}, there exist infinite-dimensional $K_1 \sub K^\perp$ and $L_1 \sub L$ such that $K_1 \perp L_1$. Let $B_1$ and $B_1'$ be orthonormal bases of $K_1$ and $L_1$ respectively and consider the embedding $\theta_1 \colon S_\infty \to G_{(K_1 \oplus L_1)^\perp}$ given by the action of $S_\infty$ on $K_1 \oplus L_1$ by permuting the basis $B_1 \cup B_1'$. Let $Q_1 = \theta_1(S_\infty)$. By Lemma~\ref{l:subgroup}, $W^2 \cap Q_1$ is countably syndetic for $Q_1$, so by the proof of \cite{Kechris2007a}*{Theorem~6.24} (see also \cite{Rosendal2007}, where this fact is mentioned explicitly), $(W^2 \cap Q_1)^{10} \sub W^{20}$ contains an open subgroup $V_1$ of $Q_1$. Then there exist an element $v_1 \in V_1$ and infinite-dimensional subspaces $K_2 \sub K_1$ and $L_2 \sub L_1$ such that $v_1 L_2 = K_2$.

Let now $B_2, B_2', B_2''$ be orthonormal bases of $K_2$, $K^\perp \ominus K_2$, and $K$, respectively. Let $\theta_2 \colon S_\infty \to U(H)$ be the embedding given by the action of $S_\infty$ on $H$ by permuting the basis $B_2 \cup B_2' \cup B_2''$ and let $Q_2 = \theta_2(S_\infty)$. Then, as above, $W^{20}$ contains an open subgroup $V_2$ of $Q_2$. A basic open subgroup of $Q_2$ is the stabilizer of finitely many elements of the basis $B_2 \cup B_2' \cup B_2''$; suppose that $V_2$ is the stabilizer (in $Q_2$) of $A \cup A' \cup A''$, where $A, A', A''$ are finite subsets of $B_2, B_2', B_2''$, respectively. As $B_2$ is infinite, there exists $v_2 \in G_K \cap Q_2$ such that $v_2(B_2) \supseteq A \cup A'$. Finally, let $v_3 \in V_2$ be such that $v_3(v_2(B_2)) = B_2 \cup B_2'$, so that $v_3(v_2(K_2)) = K^\perp$. (We can achieve this as follows: let $C_1'' \sqcup C_2''$ be any splitting of $B_2'' \sminus A''$ into two infinite pieces; then define $v_3$ to be a permutation of $B_2'' \cup B_2 \cup B_2'$ that fixes $A \cup A' \cup A''$ and sends $B_2'' \sminus A''$ to $C_1''$, $v_2(B_2) \sminus (A \cup A')$ to $(B_2 \cup B_2') \sminus (A \cup A')$, and $(B_2 \cup B_2') \sminus v_2(B_2)$ to $C_2''$.)

We finally have that $v_3v_2v_1L_2 = K^\perp$ and
\[
v_3 v_2 v_1 \in V_2 G_K V_1 \sub W^{20}W^{24}W^{20} = W^{64},
\]
completing the proof of the lemma.
\end{proof}

If $r > 0$, denote by $B_r$ the open ball of radius $r$ around the identity in $U(H)$ in the operator norm.
\begin{lemma} \label{l:uniform-ball}
There exists $\eps > 0$ such that $B_\eps \sub W^{496}$.
\end{lemma}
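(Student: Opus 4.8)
The plan is to fix a sufficiently small $\eps > 0$ and show that every $u \in U(H)$ with $\|u - 1\| < \eps$ lies in $W^{496}$. I would obtain this by writing $u$ as a product of two norm-small unitaries, each of which \emph{fixes pointwise an infinite-dimensional subspace}, and then establishing the following core fact: any norm-small $w$ whose fixed-point space $\ker(w-1)$ is infinite-dimensional already lies in $W^{248} = W^{64} W^{120} W^{64}$. Multiplying the two factors then gives $W^{496}$. The guiding idea is that, rather than attempting to move the \emph{support} of a unitary onto $K^\perp$ (which Lemma~\ref{l:transport} does not quite permit, since it only sends \emph{some} subspace of a prescribed space \emph{onto} $K^\perp$), one should move a piece of its \emph{fixed space} onto $K^\perp$ --- and mapping a subspace onto $K^\perp$ is exactly the conclusion of Lemma~\ref{l:transport}.

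For the core fact I would take $w$ with $\|w - 1\| < \eps$ and $\ker(w-1)$ infinite-dimensional, and apply Lemma~\ref{l:transport} with $L = \ker(w-1)$ to obtain $v \in W^{64}$ and an infinite-dimensional $Q \sub \ker(w-1)$ with $vQ = K^\perp$. Since $w$ fixes $Q$ pointwise, the conjugate $vwv^{-1}$ fixes $K^\perp = vQ$ pointwise, so $vwv^{-1} \in G_{K^\perp} \sub I_K$, while $\|vwv^{-1} - 1\| = \|w - 1\| < \eps$. Let $N \sub W^{120}$ be the open neighborhood of the identity in $I_K$ furnished by Lemma~\ref{l:goodK}. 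Because the operator-norm topology refines the strong operator topology that $N$ is open in, for $\eps$ small enough every element of $I_K$ within operator norm $\eps$ of the identity lies in $N$; in particular $vwv^{-1} \in N \sub W^{120}$, and therefore $w \in v^{-1} W^{120} v \sub W^{248}$.

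It remains to reduce an arbitrary $u$ with $\|u-1\| < \eps$ to this situation. If $u = \lambda 1$ is scalar then $u \in I_K$ is itself norm-small, so $u \in N \sub W^{120}$ and we are done. Otherwise $u$ is not scalar, and by the spectral theorem I would choose a $u$-invariant \emph{balanced} subspace $M$: splitting the spectral measure always produces a $u$-invariant $M$ with both $M$ and $M^\perp$ infinite-dimensional whenever $u$ is not a scalar (if some eigenvalue has infinite multiplicity, halve its eigenspace; if the spectrum is continuous, split an arc; if the point spectrum is infinite with finite multiplicities, group the eigenspaces into two infinite families). Setting $u_1 = u|_M \oplus 1_{M^\perp}$ and $u_2 = 1_M \oplus u|_{M^\perp}$ gives $u = u_1 u_2$ with $\|u_i - 1\| \le \|u - 1\| < \eps$, and each $u_i$ fixes pointwise an infinite-dimensional subspace ($M^\perp$ and $M$, respectively). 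Applying the core fact to each factor yields $u_1, u_2 \in W^{248}$, whence $u \in W^{496}$, as required.

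I expect the main obstacle to lie in the last reduction: guaranteeing the $u$-invariant balanced splitting so that \emph{both} factors have infinite-dimensional fixed spaces, together with the separate handling of scalars, which are precisely the non-trivial unitaries with finite-dimensional fixed space admitting no such splitting. By contrast, once one has the idea to transport the fixed space (rather than the support) onto $K^\perp$ and thereby land inside the neighborhood $N$ of Lemma~\ref{l:goodK}, the central estimate is short; checking that $\eps$ can be taken small enough for the norm ball to enter $N$ is routine, since $N$ is a fixed strong-operator neighborhood once $K$ and $W$ are fixed.
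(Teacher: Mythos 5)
Your proposal is correct and follows essentially the same route as the paper: the paper likewise splits $u = (P_L u)(P_{L^\perp}u)$ along a $u$-invariant balanced subspace $L$ given by the spectral theorem, transports a piece of each factor's fixed space onto $K^\perp$ via Lemma~\ref{l:transport}, and conjugates into $B_\eps \cap I_K \sub W^{120}$ to get $W^{64}W^{120}W^{64}=W^{248}$ per factor. The only (harmless) difference is your separate treatment of scalars, which is unnecessary since every subspace is invariant under a scalar and the balanced splitting goes through uniformly.
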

\begin{proof}
Suppose $K \sub H$ is as in the conclusion of Lemma~\ref{l:goodK} and let $\eps > 0$ be such that $B_\eps \cap I_K \sub W^{120}$. We will now check that $B_\eps \sub W^{496}$. Let $u \in B_\eps$ and let $L$ be a balanced subspace of $H$ invariant under $u$ (which exists by the spectral theorem). Let $u_1 = P_Lu$ and $u_2 = P_{L^\perp}u$ so that $u = u_1u_2$ and note that $u_1, u_2 \in B_\eps$. By Lemma~\ref{l:transport}, there exists $L' \sub L^{\perp}$ and $v_1 \in W^{64}$ such that $v_1L' = K^\perp$; in particular, $v_1 u_1 v_1^{-1} \in G_{K^\perp} \sub I_K$. As the norm is invariant under conjugation, we have that $v_1 u_1 v_1^{-1} \in B_\eps \cap I_K \sub W^{120}$, whence $u_1 \in W^{64}W^{120}W^{64} = W^{248}$. A similar argument shows that $u_2 \in W^{248}$, so $u \in W^{496}$ and we are done.
\end{proof}

Now it is easy to complete the proof of Theorem~\ref{th:Steinhaus}. If $A \sub U(H)$, denote
\[
(A)_\eps = \set{u \in U(H) : \exists a \in A \ \nm{a - u} < \eps}.
\]
Let $\eps$ be the one given by Lemma~\ref{l:uniform-ball}. By \cite{BenYaacov2010p}*{Theorem~4.4}, $(W^{10})_\eps$ contains an open (in the strong operator topology) neighborhood of the identity but $(W^{10})_\eps = W^{10}B_\eps \sub W^{506}$.

We now indicate the necessary changes in the proof to establish the theorem in the case of a real Hilbert space $H$. As the proof contains practically no analytic arguments, these changes are rather minor. We replace the occasional use of the spectral theorem by the following standard lemma.
\begin{lemma} \label{l:spec-orth}
Let $H$ be a real, infinite-dimensional Hilbert space and $T$ an orthogonal operator. Then there exists a decomposition $H = \boplus_{n \in \N} H_n$ such that each $H_n$ is infinite-dimensional and invariant under $T$.
\end{lemma}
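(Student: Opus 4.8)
The plan is to reduce the statement to the spectral theorem for a single self-adjoint operator and then carry out an essentially combinatorial partition of the spectrum. First I would set $S = T + T^{-1} = T + T^*$. Since $T$ is orthogonal we have $T^* = T^{-1}$, so $S$ is self-adjoint, and a one-line computation gives $ST = T^2 + I = TS$; hence $T$ commutes with $S$ and therefore with every spectral projection of $S$. Writing $F$ for the (real) spectral measure of $S$, which is supported in $[-2,2]$, each subspace $F(B)H$ (for Borel $B \sub [-2,2]$) is a closed $T$-invariant subspace. Thus any Borel partition $[-2,2] = \bigsqcup_n B_n$ produces a $T$-invariant orthogonal decomposition $H = \boplus_n F(B_n)H$, and the entire difficulty is to arrange that each piece $F(B_n)H$ is infinite-dimensional.

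To control the pieces I would record how $T$ acts on the monochromatic subspaces. On $\ker(T-I) = F(\{2\})H$ and $\ker(T+I) = F(\{-2\})H$ the operator $T$ is $\pm I$, so there every closed subspace is $T$-invariant. On $V_c := F(\{c\})H$ with $c = 2\cos\theta \in (-2,2)$ one has $T^2 - cT + I = 0$, so $J := (T - \cos\theta)/\sin\theta$ satisfies $J^2 = -I$ and $J^* = -J$; this equips $V_c$ with a complex structure under which $T$ is the scalar $e^{i\theta}$, and again every closed subspace of $V_c$ is $T$-invariant.

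Now I would split into two cases. If one of these monochromatic subspaces is infinite-dimensional, I decompose it directly into countably many infinite-dimensional $T$-invariant subspaces — trivially when $T = \pm I$, and by splitting the complex Hilbert space $V_c$ using the structure $J$ otherwise — and absorb its ($T$-invariant) orthogonal complement into one of the pieces. Otherwise every atom of $F$ contributes a finite-dimensional subspace, and since $\dim H = \infty$ either $F$ has infinitely many atoms or its continuous part is nonzero. In the first subcase I partition the atoms into countably many infinite sets, each of which then spans an infinite-dimensional subspace. In the second I pick a scalar spectral measure $\mu$ for the continuous part and, using that an atomless finite measure can be split into countably many sets of positive measure, cut the continuous spectrum into countably many positive-$\mu$-measure Borel pieces; each such piece has infinite-dimensional spectral subspace, since an atomless set of positive measure can be re-split into arbitrarily many positive-measure parts. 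In either subcase the leftover spectral mass is dumped into a single piece.

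I expect the main obstacle to be precisely this last bookkeeping: guaranteeing every one of the countably many pieces is infinite-dimensional. The delicate point is that a single spectral atom of infinite multiplicity cannot be separated by partitioning the spectrum alone, which forces one to split it internally — exactly what the complex-structure observation of the second paragraph makes possible — whereas in the continuous-spectrum case one must instead lean on the atomless splitting property of $\mu$. Once the right dichotomy (an infinite-dimensional monochromatic subspace versus only finite-dimensional atoms) is in place, each branch is routine.
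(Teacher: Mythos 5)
Your proof is correct, but it takes a genuinely different route from the paper's. The paper complexifies: it passes to $H_\C = H \otimes \C$, applies the spectral theorem to the unitary $T^\C$ to split $H_\C$ into countably many $T^\C$-invariant infinite-dimensional pieces, rearranges them so that each is invariant under the natural conjugation, and then takes real parts. You instead stay inside the real space and apply the spectral theorem to the self-adjoint operator $S = T + T^{-1}$, using that $T$ commutes with every spectral projection of $S$ and then partitioning the spectrum. Your route is more self-contained: the paper's one-line appeal to ``the spectral theorem'' for $T^\C$ in fact hides exactly the bookkeeping you carry out explicitly (an eigenvalue of infinite multiplicity cannot be separated by cutting the spectrum and must be split internally; finitely many finite-dimensional atoms force a nonzero continuous part, which is then split using atomlessness of a scalar spectral measure), whereas the complexification route lets the paper delegate all of this to a standard fact about unitaries, at the cost of the extra conjugation-symmetry step. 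One sentence of yours needs repair: ``every closed subspace of $V_c$ is $T$-invariant'' is false for real subspaces that are not $J$-invariant (a real line $\R x$ is not preserved by $T = \cos\theta + \sin\theta\, J$ when $\sin\theta \neq 0$); what is true, and what your splitting actually uses, is that every closed \emph{complex} (i.e.\ $J$-invariant) subspace of $V_c$ is $T$-invariant, and that complex orthogonality refines real orthogonality for the inner product $\langle x, y\rangle + i\langle x, Jy\rangle$. With that wording fixed the argument goes through; note also that both proofs implicitly use separability of $H$, which holds throughout the paper.
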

\begin{proof}
Let $H_\C = H \otimes \C$ be the complexification of $H$ and let $T^\C$ be the complexification of $T$ so that $T^\C \in U(H_\C)$. There is a natural operation of conjugation on $H_\C$, $\xi \otimes z \mapsto \xi \otimes \conj{z}$, one can identify $H$ with the real subspace of $H^\C$ given by $\set{\eta \in H^\C : \eta = \conj{\eta}}$, $T^\C$ commutes with conjugation, and $T^\C|_H = T$. By the spectral theorem, $H^\C$ decomposes as a sum $\boplus_n K_n$ of $T^\C$-invariant infinite-dimensional subspaces and by rearranging, we can further assume that $K_n = \conj{K_n}$ for each $n$. Then for each $n$, $H_n = \set{\eta + \conj{\eta} : \eta \in K_n}$ is an infinite-dimensional subspace of $H$, and the $H_n$s are pairwise orthogonal and invariant under $T$.
\end{proof}
We now go through the lemmas above one by one. The proof of Lemma~\ref{l:full} goes through verbatim in the real case. The proof of Lemma~\ref{l:contains-Kn} goes through verbatim as well; only in the proof of the Halmos--Kakutani theorem, we need to replace the use of the spectral theorem by Lemma~\ref{l:spec-orth}. Lemmas~\ref{l:goodK},\ref{l:orth},\ref{l:transport} survive without changes (we note that the Gaussian construction works equally well, and in a sense, even more easily, in the real case). In Lemma~\ref{l:uniform-ball}, we need once again to invoke Lemma~\ref{l:spec-orth}. Finally, to complete the proof, in order to apply \cite{BenYaacov2010p}*{Theorem~4.4}, we need to check that the orthogonal group has ample topometric generics. To verify this for the unitary group, the authors of \cite{BenYaacov2010p} use a result of Rosendal (\cite{Rosendal2009}*{Proposition~6.6}). As the author of \cite{Rosendal2009} notes explicitly, his proof works equally well for the orthogonal group.

\begin{proof}[Proof of Corollary~\ref{c:minimal}]
\eqref{c:min:i}. Let $\tau$ denote the natural Polish topology on $G$ and $\tau'$ be some other separable topology. Then the identity map $(G, \tau) \to (G, \tau')$ is continuous by Theorem~\ref{th:autom-cont} and open by the theorem of Stojanov~\cite{Stojanov1984} and therefore a homeomorphism.

\eqref{c:min:ii}. If $G'$ is Polish and $\phi \colon G \to G'$ is a homomorphism, then $\phi$ is continuous by Theorem~\ref{th:autom-cont} and open by \cite{Stojanov1984}. Then $\phi(G)$ is isomorphic (as a topological group) with $G/\ker \phi$, therefore Polish, whence closed in $G'$.
\end{proof}

\bibliography{mybiblio}
\end{document}